\newtheorem{thm}{Theorem}[section]
\newtheorem{conj}[thm]{Conjecture}
\numberwithin{equation}{section}
\numberwithin{equation}{section}
\makeatletter \@addtoreset{equation}{section} \makeatother
\def\.{\hskip.06cm}
\tikzstyle{every node}=[circle,inner sep=1pt,fill=white!60]
\tikzstyle{tn}=[shape=circle, draw, color=black!70]
\begin{document}
\begin{CJK*}{GBK}{song}

\begin{center}
{\large\bf  Proof of a Conjecture of Reiner-Tenner-Yong on

Barely Set-valued Tableaux}
\end{center}

\begin{center}

{\small  Neil J.Y. Fan$^1$, Peter L. Guo$^2$ and Sophie C.C. Sun$^3$}

\vskip 4mm
$^1$Department of Mathematics\\
Sichuan University, Chengdu, Sichuan 610064, P.R. China
\\[3mm]

$^{2,3}$Center for Combinatorics, LPMC\\
Nankai University,
Tianjin 300071,
P.R. China

\vskip 4mm

$^1$fan@scu.edu.cn, $^2$lguo@nankai.edu.cn, $^3$suncongcong@mail.nankai.edu.cn

\end{center}

\begin{abstract}
The notion of a barely set-valued semistandard Young tableau was introduced by Reiner, Tenner and Yong in their study of
the probability distribution of   edges in the Young lattice of partitions.
Given a partition $\lambda$ and a positive integer $k$, let ${\mathrm{BSSYT}}(\lambda,k)$ (respectively,
${\mathrm{SYT}}(\lambda,k)$) denote the set of barely set-valued semistandard Young tableaux (respectively, ordinary semistandard Young tableaux) of shape $\lambda$ with entries in row $i$   not exceeding $k+i$.
In the case when $\lambda$ is a rectangular staircase partition $\delta_d(b^a)$, Reiner, Tenner and Yong conjectured that
$|{\mathrm{BSSYT}}(\lambda,k)|= \frac{kab(d-1)}{(a+b)} |{\mathrm{SYT}}(\lambda,k)|$.
In this paper, we establish
a connection between
 barely set-valued  tableaux   and reverse   plane partitions with  designated corners. We show that for
  any shape
 $\lambda$, the expected jaggedness of a subshape of $\lambda$ under the weak probability distribution can be
 expressed as $\frac{2|{\mathrm{BSSYT}}(\lambda,k)|}
 {k|{\mathrm{SYT}}(\lambda,k)|}$. On the other hand,
 when $\lambda$ is a balanced shape  with $r$
 rows and $c$ columns,
 Chan,  Haddadan,   Hopkins and  Moci proved that the expected jaggedness of a subshape in $\lambda$ under the weak distribution equals $2rc/(r+c)$.
Hence, for a balanced shape $\lambda$ with $r$
 rows and $c$ columns, we establish  the relation  that $|{\mathrm{BSSYT}}(\lambda,k)|=
\frac{krc}{(r+c)}|{\mathrm{SYT}}(\lambda,k)|$. Since
a rectangular staircase shape $\delta_d(b^a)$  is a  balanced shape, we confirm the conjecture of
Reiner, Tenner and Yong.

\end{abstract}

\section{Introduction}

The structure of a set-valued semistandard Young  tableau was  introduced by
Buch \cite{Buch} in his study of the Littlewood-Richardson rule for stable Grothendieck polynomials. A flagged set-valued semistandard Young tableau, defined by  Knutson, Miller and Yong
\cite{Knutson}, is a set-valued semistandard Young tableau such that
each value in row $i$ does not exceed a positive integer $\phi_i$.
The notion of flagged barely set-valued semistandard Young tableaux arose in the work of Reiner, Tenner and Yong \cite{Reiner} on  the probability distribution of the edges in the Young lattice of partitions.

The main objective of this paper is to prove a
conjecture of Reiner, Tenner and Yong \cite{Reiner} concerning the enumeration of barely set-valued  tableaux. A barely set-valued semistandard Young tableau is a set-valued semistandard Young tableau such that
exactly one square is assigned  two integers and each of the remaining squares is occupied by a single integer. For an example, see Figure \ref{fig BSSYT}.

\begin{figure}[h]
\vspace{-0.8cm}  
\setlength{\abovecaptionskip}{0cm}   
\setlength{\belowcaptionskip}{0cm}   
\begin{center}
\begin{picture}(305,100)
\setlength{\unitlength}{1mm}
\put(30,-30){\begin{picture}(-180,60)(0,-25)
\put(15,30){\line(1,0){20}}
\put(15,25){\line(1,0){20}}
\put(15,20){\line(1,0){15}}
\put(15,15){\line(1,0){10}}
\put(15,10){\line(1,0){10}}
\put(15,10){\line(0,1){20}}
\put(20,10){\line(0,1){20}}
\put(25,10){\line(0,1){20}}
\put(30,20){\line(0,1){10}}
\put(35,25){\line(0,1){5}}
\put(16.5,26.5){\small{$1$}}
\put(21.5,26.5){\small{$2$}}
\put(26.5,26.5){\small{$2$}}
\put(31.5,26.5){\small{$3$}}
\put(15.5,21.5){\small{$24$}}
\put(21.5,21.5){\small{$4$}}
\put(26.5,21.5){\small{$4$}}
\put(16.5,16.5){\small{$5$}}
\put(21.5,16.5){\small{$6$}}
\put(16.5,11.5){\small{$7$}}
\put(21.5,11.5){\small{$7$}}
\end{picture}}
\end{picture}
\caption{A barely set-valued semistandard Young tableau.}\label{fig BSSYT}
\end{center}
\end{figure}

For a partition $\lambda$ and a positive integer $k$, let  ${\mathrm{BSSYT}}(\lambda,k)$ (respectively, ${\mathrm{SYT}}(\lambda,k)$) denote the set of barely set-valued semistandard Young tableaux (respectively, ordinary semistandard Young tableaux) of shape $\lambda$ such that the every integer in row $i$ does not exceed  $k+i$.
When $\lambda$ is a rectangular staircase shape $\delta_d(b^a)$, namely, the Young diagram obtained from  the staircase shape  $\delta_d=(d-1,d-2,\ldots,1)$
by replacing each square by an $a\times b$ rectangle,
Reiner, Tenner and Yong \cite{Reiner}
posed the following conjecture.
\begin{conj}[ {Reiner, Tenner and Yong \cite{Reiner}}]\label{Conj-1}
For any positive integers $a,b,d$ and $k$,
\begin{equation}\label{ccc}
|{\mathrm{BSSYT}}(\delta_d(b^a),k)|=\frac{kab(d-1)}{(a+b)}
  |{\mathrm{SYT}}(\delta_d(b^a),k)|.
\end{equation}
\end{conj}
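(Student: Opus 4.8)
The plan is to follow the route indicated in the abstract: convert the enumeration of barely set-valued tableaux into the evaluation of an expected jaggedness, and then invoke the theorem of Chan, Haddadan, Hopkins and Moci for balanced shapes. I begin with the classical weight-shift: subtracting $i$ from every entry lying in row $i$ is a bijection from $\mathrm{SYT}(\lambda,k)$ onto the set $\mathrm{RPP}(\lambda,k)$ of reverse plane partitions of shape $\lambda$ with all entries in $\{0,1,\dots,k\}$, so that $|\mathrm{SYT}(\lambda,k)|=|\mathrm{RPP}(\lambda,k)|$. A reverse plane partition $P$ with entries at most $k$ is in turn the same data as a weakly increasing chain of subshapes $\varnothing=I_0(P)\subseteq I_1(P)\subseteq\cdots\subseteq I_k(P)\subseteq I_{k+1}(P)=\lambda$, where $I_t(P)=\{u:P(u)<t\}$; and the weak distribution on subshapes of $\lambda$ is (in the form most convenient here) the one obtained by drawing a uniformly random $P\in\mathrm{RPP}(\lambda,k)$ together with a uniformly random threshold $t\in\{1,\dots,k\}$ and returning $\mu=I_t(P)$.

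The heart of the argument is the bijection behind the ``reverse plane partitions with designated corners'' of the abstract. Applying the same weight-shift to a tableau $T\in\mathrm{BSSYT}(\lambda,k)$ yields a filling of $\lambda$ by integers in $\{0,\dots,k\}$ that obeys the reverse-plane-partition inequalities except that exactly one cell $c$ carries two distinct values $v'<w'$; deleting the larger value gives an honest $P_{v'}\in\mathrm{RPP}(\lambda,k)$ with $P_{v'}(c)=v'$, and deleting the smaller gives $P_{w'}\in\mathrm{RPP}(\lambda,k)$ with $P_{w'}(c)=w'$, the two agreeing off $c$. Conversely, call a cell $c$ a corner of $I_t(P)$ if it is either a maximal cell of $I_t(P)$ (a removable corner) or a minimal cell of $\lambda\setminus I_t(P)$ (an addable corner); given a triple $(P,t,c)$ with $P\in\mathrm{RPP}(\lambda,k)$, $t\in\{1,\dots,k\}$ and $c$ a corner of $I_t(P)$, re-insert the value $t$ at $c$ alongside $P(c)$ if $c$ is removable, or $t-1$ if $c$ is addable. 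I claim that after un-shifting this always lands in $\mathrm{BSSYT}(\lambda,k)$ and that the resulting map from triples to barely set-valued tableaux is exactly two-to-one, the two triples over $T$ being $(P_{v'},w',c)$ and $(P_{w'},v'+1,c)$. Establishing this --- the reverse-plane-partition inequalities for $P_{v'}$ and $P_{w'}$, the corner property at $c$ in each case, and the precise form of the fibres --- is the main verification.

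Since the number of corners of a subshape $\mu\subseteq\lambda$ equals its jaggedness $\mathrm{jagg}(\mu)=|\max\mu|+|\min(\lambda\setminus\mu)|$, summing the bijection over all triples gives
\[
2\,|\mathrm{BSSYT}(\lambda,k)|=\sum_{P\in\mathrm{RPP}(\lambda,k)}\ \sum_{t=1}^{k}\mathrm{jagg}\bigl(I_t(P)\bigr)=k\,|\mathrm{SYT}(\lambda,k)|\cdot\mathbb{E}_{\mathrm{weak}}\bigl[\mathrm{jagg}\bigr],
\]
that is, $\mathbb{E}_{\mathrm{weak}}[\mathrm{jagg}]=\frac{2\,|\mathrm{BSSYT}(\lambda,k)|}{k\,|\mathrm{SYT}(\lambda,k)|}$ for every shape $\lambda$. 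When $\lambda$ is a balanced shape with $r$ rows and $c$ columns, the theorem of Chan, Haddadan, Hopkins and Moci gives $\mathbb{E}_{\mathrm{weak}}[\mathrm{jagg}]=2rc/(r+c)$, so comparison yields $|\mathrm{BSSYT}(\lambda,k)|=\frac{krc}{r+c}\,|\mathrm{SYT}(\lambda,k)|$ for every such $\lambda$. Finally, the rectangular staircase $\delta_d(b^a)$ --- obtained from $\delta_d=(d-1,d-2,\dots,1)$ by replacing each square with an $a\times b$ rectangle --- is the partition $\bigl(((d-1)b)^a,((d-2)b)^a,\dots,b^a\bigr)$, a balanced shape with $r=a(d-1)$ rows and $c=b(d-1)$ columns; substituting,
\[
|\mathrm{BSSYT}(\delta_d(b^a),k)|=\frac{k\cdot a(d-1)\cdot b(d-1)}{a(d-1)+b(d-1)}\,|\mathrm{SYT}(\delta_d(b^a),k)|=\frac{kab(d-1)}{a+b}\,|\mathrm{SYT}(\delta_d(b^a),k)|,
\]
which is Conjecture \ref{Conj-1}.

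The step I expect to be the real obstacle is the two-to-one correspondence of the second paragraph: one must check that the corner read off the doubled cell is always of the stated type, that the delete and re-insert operations are genuinely mutually inverse, and that the pairing of triples is exactly right, paying particular attention to the cases where $c$ lies on the rim of $\lambda$ and one of its covering or covered cells is absent, to be handled by the natural boundary convention. Beyond that, one should make sure that the cited form of the Chan--Haddadan--Hopkins--Moci theorem and the definition of a balanced shape are compatible with the present conventions, and record the elementary verification that each $\delta_d(b^a)$ is indeed balanced.
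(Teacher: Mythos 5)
Your proposal is correct and follows essentially the same route as the paper: the weight-shift identification of $\mathrm{SYT}(\lambda,k)$ with $\mathrm{RPP}(\lambda,k)$, the conversion of $2|\mathrm{BSSYT}(\lambda,k)|$ into the total jaggedness over $Q(\lambda,k)$, the Chan--Haddadan--Hopkins--Moci formula for balanced shapes, and the count $r=a(d-1)$, $c=b(d-1)$ for $\delta_d(b^a)$. Your single two-to-one correspondence from triples $(P,t,c)$ to barely set-valued tableaux is exactly the paper's two separate bijections (one designating a corner by deleting the larger entry of the doubled cell, one designating a proper outside corner by deleting the smaller) packaged together, and the verification you flag as the main obstacle is carried out there just as you describe.
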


For  $d=2$, Reiner, Tenner and Yong showed that the above conjecture is true
by employing the RSK algorithm as well as Stanley's hook content formula
for semistandard Young tableaux.

In this paper, we give a representation
of a
 barely set-valued tableau of shape $\lambda$ in terms of a reverse plane partition of shape   $\lambda$ together with a designated corner of a subshape of $\lambda$.
This representation enables us to establish
 a connection between the enumeration of
 barely set-valued tableaux of shape $\lambda$
 and  the expected jaggedness of a subshape of $\lambda$ under the weak distribution.
The notion of the jaggedness of a subshape of  a Young diagram was introduced  by Chan,  Haddadan,   Hopkins and  Moci
\cite{Chan-2}.
 More precisely, we
show that
  the expected jaggedness of a subshape of  $\lambda$ under the weak distribution can be expressed as
   \[\frac{2|\mathrm{BSSYT}(\lambda,k)|}
 {k|{\mathrm{SYT}}(\lambda,k)|}.\]

On the other hand, when $\lambda$ is a balanced shape with $r$
 rows and $c$ columns,  Chan,  Haddadan,   Hopkins and  Moci
 \cite{Chan-2} showed that the expected jaggedness of a subshape of  $\lambda$ under the weak distribution equals
 \[\frac{2rc}{r+c}.\]
 Hence, for a balanced shape $\lambda$ with $r$
 rows and $c$ columns, the following relation holds:
  \begin{equation}\label{aopgnj}
 |{\mathrm{BSSYT}}(\lambda,k)|=
\frac{krc}{(r+c)}|{\mathrm{SYT}}(\lambda,k)|.
\end{equation}
Chan,  Haddadan,   Hopkins and  Moci
 \cite{Chan-2} observed that  a
rectangular staircase partition $\delta_d(b^a)$
is a balanced shape. Restricting
 to a rectangular staircase shape $\delta_d(b^a)$,
 \eqref{aopgnj} yields \eqref{ccc}, and this
 leads to a proof of Conjecture \ref{Conj-1}.

Let us proceed with some terminology and notation.
 Let $\lambda=(\lambda_1,\lambda_2,\ldots,\lambda_\ell)$ be a partition of a nonnegative  integer $n$, that is, $\lambda=(\lambda_1,\lambda_2,\ldots,\lambda_\ell)$ is a sequence of nonnegative  integers such that $\lambda_1\geq \lambda_2\geq \cdots \geq \lambda_\ell\geq 0 $ and $\lambda_1+\lambda_2+\cdots+\lambda_\ell=n$.
The Young diagram
of  $\lambda$ is a left-justified array of squares  with $\lambda_i$ squares in  row $i$.
If no confusion arises,
we do not distinguish a partition and its Young diagram.
A Young diagram is also called a shape.
A set-valued semistandard Young tableau of shape  $\lambda$ is an assignment of finite sets of positive integers into  the squares of $\lambda$
such that the sets in each row (respectively, column) are weakly (respectively, strictly) increasing, see Buch \cite{Buch}.
For two sets $A$ and $B$ of positive integers, we write $A\leq B$ if $\max A\leq \min B$
and $A<B$ if $\max A<\min B$. When the set in each square contains a single  integer,
a  set-valued semistandard Young tableau  becomes  an ordinary semistandard Young tableau.
In the case when exactly one square receives two integers and each of the remaining squares receives only one integer, such a set-valued tableau is called a barely set-valued semistandard Young tableau, see Reiner, Tenner and Yong \cite{Reiner}.

Given  a vector $\phi=(\phi_1,\phi_2,\ldots)$ of positive integers, we say that a set-valued semistandard Young tableau is flagged by $\phi$ if every entry in row $i$ cannot exceed $\phi_i$, see Knutson, Miller and Yong \cite{Knutson}.
In particular, when $\phi=(k+1,k+2,\ldots)$, that is, $\phi_i=k+i$, we use $\mathrm{BSSYT}(\lambda,k)$ (respectively, $\mathrm{SYT}(\lambda,k)$) to  represent the set of barely set-valued semistandard
Young tableaux
(respectively, ordinary semistandard Young tableaux)
flagged by $\phi$.

It is worth mentioning that Conjecture \ref{Conj-1} can be reformulated in terms of  the polynomials $FK(w, \ell)$ defined on 0-Hecke
words of length $\ell$ for a  permutation $w$ \cite{Reiner}.
A 0-Hecke word of a permutation $w$ on $\{1,2,\ldots,n\}$ can be constructed
recursively as follows.
As usual, we use $s_i$ ($1\leq i\leq n-1$) to denote
the simple transposition  that swaps $i$ and $i+1$.
 An expression of $w$ as a product of simple
  transpositions is called reduced if it
 consists of a minimum number of simple
 transpositions.  The length of $w$, denoted $\ell(w)$,
is the number  of simple transpositions in a reduced
expression of $w$.
The  length $\ell(w)$ of $w$
can also be  interpreted
as the number of inversions of $w=w_1w_2\cdots w_n$, that is,
\[\ell(w)=|\{(i,j)\,|\, 1\leq i<j\leq n, w_i>w_j\}|.\]

Given a sequence $S=(s_{i_1},s_{i_2},\ldots,s_{i_\ell})$
of simple transpositions, we construct a permutation, denoted $H(S)$,
by a recursive procedure as follows. If $\ell=1$, set
$H(S)=s_{i_1}$. If $\ell>1$,  let $S'=(s_{i_1},s_{i_2},\ldots,s_{i_{\ell-1}})$ and set
\[
H(S)=\left\{\begin{array}{ll}
H(S'), &\mbox{if $\ell(H(S')\cdot s_{i_\ell})<\ell(H(S'))$},\\[5pt]
H(S')\cdot s_{i_\ell}, &\mbox{if $\ell(H(S')\cdot s_{i_\ell})>\ell(H(S'))$}.\end{array}\right.
\]
If $H(S)=w$, then $S$ is called a 0-Hecke word of $w$ of length $\ell$. It is easily seen that
a 0-Hecke word of $w$ of length $\ell(w)$  is a reduced expression  of $w$.

The polynomials $FK(w,\ell)$ are a generalization of  the following
polynomials  defined by
Fomin and Kirilov \cite{Fomin}:
\begin{equation}\label{EX-1}
\sum_{(s_{i_1},s_{i_2},\ldots,s_{i_{\ell_0}})}(x+i_1)(x+i_2)\cdots (x+i_{\ell_0}),
\end{equation}
where the sum  ranges over reduced
expressions  of the longest permutation $w_0=n (n-1)\cdots 1$ of length $\ell_0=\ell(w_0)=n(n-1)/2$.
Using a counting formula for  the monomials in
a Schubert polynomial due to Macdonald \cite{Macdonald}
and a formula on the number of reverse plane partitions of a staircase shape found by
Proctor \cite{Proctor}, Fomin and Kirilov \cite{Fomin}
established the following relation:
\begin{equation}\label{XY}
\sum_{(s_{i_1},s_{i_2},\ldots,s_{i_{\ell_0}})}(x+i_1)(x+i_2)\cdots (x+i_{\ell_0})=
{n\choose 2}!\prod_{1\leq i< j\leq n} \frac{x+i+j-1}{i+j-1}.
\end{equation}
Equating the leading  coefficients on both sides, \eqref{XY}
gives the number of reduced expressions  of the longest permutation $w_0$, as proved by Stanley \cite{Stanley1}.
Reiner, Tenner and Yong \cite{Reiner} defined the polynomial
$FK(w,\ell)$ as follows:
\[FK(w,\ell)=\sum_{(s_{i_1},s_{i_2},\ldots,s_{i_\ell})}(x+i_1)(x+i_2)\cdots (x+i_\ell),\]
where the sum ranges over 0-Hecke
words of $w$ of length $\ell$. For the case  $w=w_0$ and $\ell=\ell(w_0)$,
$FK(w,\ell)$ reduces to
the polynomial in  \eqref{EX-1}.

Reiner, Tenner and Yong~\cite{Reiner} showed that  Conjecture
\ref{Conj-1} is equivalent to a relation on $FK(w,\ell)$,
where $w$ is  a   dominant permutation whose Lehmer code is
a rectangular staircase shape.
Recall that $w$ is called a dominant permutation if it is 132-avoiding, that is, if there are no
indices $i_1<i_2<i_3$ such that $w_{i_1} < w_{i_3} < w_{i_2}$.
There is  an alternative  characterization of a dominant permutation, that is,   $w$ is  dominant  if and only if the Lehmer code
$(c_1(w),c_{2}(w),\ldots,c_{n}(w))$ of $w$ is a nonincreasing
sequence,
where, for $1\leq i\leq n$, $c_i(w)$ is the number of inversions of $w$ at position $i$,
namely,
\[c_i(w)=|\{j\,|\, i<j, w_i>w_j\}|.\]

Employing properties of Grothendieck polynomials,
Reiner, Tenner and Yong~\cite{Reiner} showed that Conjecture \ref{Conj-1}
is equivalent to the following assertion.

\begin{conj}[{Reiner, Tenner and Yong \cite{Reiner}}]\label{Conj-1-e}
Let $w$ be a dominant permutation whose Lehmer code is a rectangular staircase shape $\lambda=\delta_d(b^a)$. Then
\begin{equation}\label{TTT}
\frac{FK(w,\ell(w)+1)}{FK(w,\ell(w))}
= \binom{ \ell(w)+1 }{2}\left( \frac{ 4x }{d(a+b)} + 1\right).
\end{equation}
\end{conj}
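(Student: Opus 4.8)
The plan is to derive Conjecture~\ref{Conj-1-e} from Conjecture~\ref{Conj-1} via the equivalence already established in \cite{Reiner}, and to prove Conjecture~\ref{Conj-1} — indeed the stronger identity \eqref{aopgnj} for every balanced shape — by reducing the enumeration of barely set-valued tableaux to the expected jaggedness studied in \cite{Chan-2}. First I would move from flagged tableaux to reverse plane partitions through the normalization $T(i,j)\mapsto T(i,j)-i$. Under the flag $\phi_i=k+i$ this sends the entries of an ordinary semistandard tableau bijectively to a reverse plane partition $\pi$ of shape $\lambda$ with entries in $\{0,1,\dots,k\}$ (column-strictness becomes weak increase down columns), so $|{\mathrm{SYT}}(\lambda,k)|=|{\mathrm{RPP}}(\lambda,k)|$; and such a $\pi$ is the same datum as the multichain of subshapes $\mu^{(t)}(\pi)=\{c:\pi(c)<t\}$, $1\le t\le k$, of $\lambda$.

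The heart of the argument is a pair of bijections. Let $T\in{\mathrm{BSSYT}}(\lambda,k)$ have its unique doubled cell $c_0=(i_0,j_0)$ filled by $\{p,q\}$ with $p<q$. Normalizing every cell as above and putting the \emph{smaller} value $p-i_0$ in $c_0$, I would check that the result is a genuine reverse plane partition $\pi$ and that $t:=q-i_0$ satisfies $\pi(c_0)<t\le k$ together with $\pi(c_{\text{below}})\ge t$ and $\pi(c_{\text{right}})\ge t$ whenever those cells lie in $\lambda$; in multichain language this says exactly that $c_0$ is an outer corner (a maximal cell) of $\mu^{(t)}(\pi)$. Putting instead the \emph{larger} value $q-i_0$ in $c_0$ is equally legitimate and, by the symmetric verification, produces an RPP $\pi$, a level $t$, and an inner corner (a minimal cell of $\lambda\setminus\mu^{(t)}(\pi)$). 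Both maps are bijections of ${\mathrm{BSSYT}}(\lambda,k)$ onto the stated data, so, using that the jaggedness of a subshape equals its number of outer corners plus its number of inner corners, and that the weak-distribution expectation is computed by averaging over the uniformly random pair $(\pi,t)$ with $\pi\in{\mathrm{RPP}}(\lambda,k)$ and $t\in\{1,\dots,k\}$,
\[
\mathbb{E}_{\mathrm{weak}}\big[{\mathrm{jag}}\big]=\frac{1}{k\,|{\mathrm{RPP}}(\lambda,k)|}\sum_{\pi}\sum_{t=1}^{k}{\mathrm{jag}}\big(\mu^{(t)}(\pi)\big)=\frac{2\,|{\mathrm{BSSYT}}(\lambda,k)|}{k\,|{\mathrm{SYT}}(\lambda,k)|}.
\]

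Finally, invoking the theorem of Chan, Haddadan, Hopkins and Moci \cite{Chan-2} that $\mathbb{E}_{\mathrm{weak}}[{\mathrm{jag}}]=2rc/(r+c)$ for a balanced shape with $r$ rows and $c$ columns gives \eqref{aopgnj}; since $\delta_d(b^a)$ is balanced with $r=a(d-1)$ rows and $c=b(d-1)$ columns, the elementary simplification $\frac{krc}{r+c}=\frac{kab(d-1)}{a+b}$ yields \eqref{ccc}, i.e.\ Conjecture~\ref{Conj-1}, and then the equivalence of \cite{Reiner} delivers \eqref{TTT}. I expect the main obstacle to be precisely the two bijections: one must verify the semistandard-to-RPP inequalities and the corner conditions in every boundary case (the doubled cell on the rim of $\lambda$ or in the first row, and level sets $\mu^{(t)}$ that are empty or all of $\lambda$), and one must match the construction exactly against the definitions of the weak distribution and of jaggedness used in \cite{Chan-2}; once that bookkeeping is secured, the remaining steps are the quoted results and the arithmetic.
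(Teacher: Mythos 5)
Your proposal is correct and takes essentially the same route as the paper: your two bijections (keeping the smaller entry of the doubled cell yields a corner of the induced subshape, keeping the larger one yields a proper outside corner) are precisely the paper's two representation theorems, the resulting identity $\mathbb{E}_{\mathrm{weak}}[\mathrm{jag}]=2|{\mathrm{BSSYT}}(\lambda,k)|/(k|{\mathrm{SYT}}(\lambda,k)|)$ combined with the Chan--Haddadan--Hopkins--Moci formula $2rc/(r+c)$ for balanced shapes is exactly how the paper proves Theorem \ref{Gen-1}, and the final passage to \eqref{TTT} via the Reiner--Tenner--Yong equivalence (their relation between the $FK$-ratio and $|{\mathrm{BSSYT}}|/|{\mathrm{SYT}}|$) matches the paper's concluding argument. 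No gaps beyond the routine boundary-case bookkeeping you already flag.
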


 In this paper, we obtain an extension of   \eqref{TTT}  to a dominant  permutation whose Lehmer code is a balanced shape.

\section{The formula of Chan-Haddadan-Hopkins-Moci}\label{section2}

In this section, we shall give an overview of a formula
of Chan, Haddadan, Hopkins and Moci \cite{Chan-2}
for the expected jaggedness
of a subshape in a Young diagram under a toggle-symmetric distribution. This formula is needed in our proof of the
conjecture of Reiner, Tenner and Yong. It is a
far reaching generalization of a formula derived by
Chan,  L\'opez Mart\'in, Pflueger and  Teixidor i Bigas \cite{Chan-1}.
While the formula is quite involved for a general shape $\lambda$,
as far as this paper is concerned, we only need
the case when  $
 \lambda$ is
a balanced shape. In this case it admits
a closed form. Moreover, we shall restrict our
 attention to a special distribution, namely,  the
weak distribution on the subshapes of a balanced shape. This is possible because as shown in \cite{Chan-2},
the weak distribution is indeed a toggle-symmetric distribution.

Let us begin with the necessary terminology.
A toggle-symmetric probability distribution is a probability distribution on the order ideals of a finite poset subject to
certain symmetry conditions. Given a finite poset $(P,\leq)$,
an order ideal $I$ of $P$ is a subset of $P$
such that if $p\in I$ and  $q\in P$ with $q\leq p$,
then  $q\in I$. Let $J(P)$ denote the  set of order ideals of $P$. We say that an element $p\in  P$ can be toggled into $I$ if $p$ is a minimal element not in $I$, and that
$p$ can be toggled out of $I$ if $p$ is a maximal element  in $I$. To be more specific, we say that
$p$ can be toggled into $I$ if $p\not\in I$
and $I\cup \{p\}$ is an order ideal of $P$, and that
$p$ can be toggled out of  $I$ if $p\in I$
and $I\setminus \{p\}$ is an order ideal of $P$.
For each $p\in P$,  the indicator
random variables $\mathcal{T}_p^+$ and $\mathcal{T}_p^-$ on $J(P)$ are defined as follows. For an order ideal $I$ of $P$, set
$\mathcal{T}_p^+(I)=1$ if $p$ can be toggled into $I$, and
$\mathcal{T}_p^+(I)=0$ otherwise. Similarly,
set
$\mathcal{T}_p^-(I)=1$ if $p$ can be toggled out of $I$, and
$\mathcal{T}_p^-(I)=0$ otherwise.

Given a probability distribution on  $J(P)$ and
an element $p\in P$, the distribution
 is called toggle-symmetric at  $p$
if the probability that $p$
can be toggled into an order ideal $I$ equals the
probability that $p$
can be toggled out of an order ideal $I$. We say that
a distribution on $J(P)$ is toggle-symmetric if it
is toggle-symmetric at every $p\in P$.
In other words, a probability distribution on $J(P)$ is
toggle-symmetric if for every $p\in P$,
the expected value of the random variable $\mathcal{T}_p^+$
equals the expected value of the random variable $\mathcal{T}_p^-$.

When $P$
is the poset corresponding to a skew Young diagram, Chan, Haddadan, Hopkins and Moci \cite{Chan-2}
found a formula for the expected jaggedness of an order ideal of   $P$ for a toggle-symmetric distribution. The jaggedness of an order ideal $I$ of a poset $P$, denoted $\mathrm{jag}(I)$, is defined to be  the total number of elements in $ P$ which can be toggled into $I$ or toggled out of $I$.
In this paper, we shall be concerned only with the posets
corresponding to  Young diagrams.

To a Young diagram $\lambda$, one can associate
a poset structure on the squares of $\lambda$.
For two  squares $B$ and $B'$ of $\lambda$, we say that
$B$ is less than or equal to $B'$ if   $B$ occurs northwest of $B'$. More precisely, assume that $B$ is in row $i$ and column $j$, and $B'$ is in row $i'$ and column $j'$. Then $B\leq B'$ if and only if $i\leq i'$ and $j\leq j'$.
It is readily  seen that a subset of squares of $\lambda$ forms an order ideal with respect to the
above poset structure if and only if it is a subshape of $\lambda$, namely,  a Young diagram contained in $\lambda$.

By the definition of the jaggedness of an order ideal, it is easily seen  that the jaggedness of a subshape $\mu$ of $\lambda$ equals the total number of corners and proper   outside corners of $\mu$ \cite{Chan-2}.
A corner of  a shape $\mu$ is a square in $\mu$ such that the squares immediately below and to the right are not in $\mu$. While
an outside corner of $\mu$ is a square out of $\mu$ such
that the squares immediately above and to the left are in $\mu$, see, for example, the survey of Pak \cite{Pak}.
We assume that the square just to the right of the first
row  and the  square just below the first column
are also outside corners. It should be noticed that the outside corners of $\mu$ are also called   outer boxes of $\mu$, see Stanley \cite[Chapter 7, Appendix 1]{Stanley}. By a
 proper outside corner of $\mu$ we mean an outside corner of $\mu$
 contained in $\lambda$.

Clearly,
a  square of $\lambda$  can be toggled out of $\mu$ if  and only if it is a corner  of $\mu$, while a  square of $\lambda$  can be toggled into $\mu$ if  and only if it is an outside corner  of $\mu$. Thus the jaggedness $\mathrm{jag}(\mu)$
equals the total number of  corners and proper outside
  corners of $\mu$.
For example, the jaggedness of the subshape $(3,3,2,1)$ of the diagram $(4,4,3,2)$ in Figure \ref{PPO}
equals 5, since it has  three corners and two
proper outside corners, which are depicted by  solid
squares and  open   squares respectively.
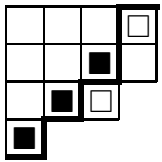
\begin{figure}[h]
\setlength{\abovecaptionskip}{-0.4cm}   
\setlength{\belowcaptionskip}{0cm}   
\begin{center}
\begin{picture}(-120,100)
\setlength{\unitlength}{1mm}
\put(-30,30){\line(1,0){20}}
\put(-30,25){\line(1,0){20}}
\put(-30,20){\line(1,0){20}}
\put(-30,15){\line(1,0){15}}
\put(-30,10){\line(1,0){5}}
\put(-30,10){\line(0,1){20}}
\put(-25,10){\line(0,1){20}}
\put(-20,15){\line(0,1){15}}
\put(-15,15){\line(0,1){15}}
\put(-10,20){\line(0,1){10}}
\put(-19.15,21){$\blacksquare$}
\put(-24.15,16){$\blacksquare$}
\put(-29.15,11){$\blacksquare$}
\linethickness{1.7pt}
\put(-30,10){\line(1,0){5.3}}
\put(-25,10){\line(0,1){5.3}}
\put(-25,15){\line(1,0){5.3}}
\put(-20,15){\line(0,1){5.3}}
\put(-20,20){\line(1,0){5.3}}
\put(-15,20){\line(0,1){10.3}}
\put(-15,30){\line(1,0){5.3}}
\put(-14,26){$\square$}
\put(-19,16){$\square$}
\end{picture}
\caption{ Corners and proper outside corners.}\label{PPO}
\end{center}
\end{figure}

The jaggedness of a subshape $\mu$ of $\lambda$ can also  be described in terms of the total number of left turns and right turns of the lattice path corresponding to $\mu$. By a lattice path in $\lambda$ we mean a lattice path  in $\lambda$ from the bottom left corner to the top right corner consisting of unit east steps and unit north steps.
Clearly, a subshape of $\lambda$ is determined by a lattice path in the Young diagram of $\lambda$.
For example, the thick line in Figure \ref{fig lattice path} is a lattice path in $(4,4,3,1)$ corresponding to the  subshape   $(3,3,2,1)$.
\begin{figure}[h]
\setlength{\abovecaptionskip}{-0.4cm}   
\setlength{\belowcaptionskip}{0cm}   
\begin{center}
\begin{picture}(-115,100)
\setlength{\unitlength}{1mm}
\linethickness{0.25pt}
\put(-30,30){\line(1,0){20}}
\put(-30,25){\line(1,0){20}}
\put(-30,20){\line(1,0){20}}
\put(-30,15){\line(1,0){15}}
\put(-30,10){\line(1,0){5}}
\put(-30,10){\line(0,1){20}}
\put(-25,10){\line(0,1){20}}
\put(-20,15){\line(0,1){15}}
\put(-15,15){\line(0,1){15}}
\put(-10,20){\line(0,1){10}}
\thicklines
\linethickness{1.7pt}
\put(-30,10){\line(1,0){5.3}}
\put(-25,10){\line(0,1){5.3}}
\put(-25,15){\line(1,0){5.3}}
\put(-20,15){\line(0,1){5.3}}
\put(-20,20){\line(1,0){5.3}}
\put(-15,20){\line(0,1){10.3}}
\put(-15,30){\line(1,0){5.3}}
\put(-20,20){\circle{2}}
\put(-15,30){\circle{2}}
\put(-25,10){\circle*{2}}
\put(-20,15){\circle*{2}}
\put(-15,20){\circle*{2}}
\end{picture}
\caption{ Left turns and right turns of a lattice path in a Young diagram.}\label{fig lattice path}
\end{center}
\end{figure}
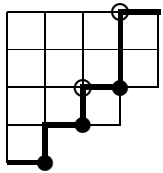

The  notion of   left turns and   right turns  of a lattice path in $\lambda$  was introduced by Chan,  L\'opez Mart\'in, Pflueger and  Teixidor i Bigas \cite{Chan-1} for the computation of the genera of the Brill-Noether curves.
To be more specific, a left turn of a lattice path  in $\lambda$ is an east step immediately  followed by a north step, and a right turn is a north step immediately  followed by
an east step with the additional requirement that
these two consecutive steps are borders of a square of $\lambda$.
In Figure \ref{fig lattice path}, the solid circles and the open circles   represent the left turns and
the right turns of the path in $(4,4,3,1)$, respectively. It is evident that for a subshape $\mu$ of $\lambda$, a corner (respectively, a proper outside corner) of $\mu$ corresponds to a left turn (respectively, a right turn) of the lattice path determined by $\mu$. It follows that the jaggedness   of a subshape  of $\lambda$ also equals the total number of left turns and right turns of the corresponding lattice path in $\lambda$, see \cite{Chan-2}.

Chan, Haddadan, Hopkins and Moci \cite{Chan-2} found a formula for the expected jaggedness of a subshape for a general
skew  Young diagram, which
 turns out to have a closed form when it
is a balanced Young diagram.
A balanced shape is defined
in terms of the positions of  outward corners of a Young diagram $\lambda$. An outward corner of  $\lambda$ is
a north step immediately followed by
an east step along the southeast boundary of $\lambda$.
A Young diagram $\lambda$ is called a balanced shape if the turning point  of each outward corner
of $\lambda$ lies on the main anti-diagonal of $\lambda$, that is, the straight line connecting the
starting point and the terminating point of
a lattice path in $\lambda$.
For example,  Figure \ref{balanced} illustrates two balanced shapes,  where the solid dots stand for  the tuning points of outward corners and
the dashed lines represent the main anti-diagonals.
From the largest part and the number of parts of $\lambda$,
it is easy to determine whether the turning point of an outward corner lies on the anti-diagonal.
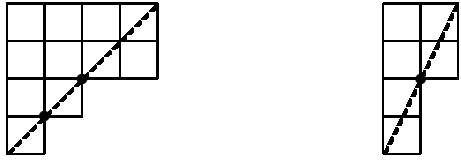
\begin{figure}[h]
\setlength{\abovecaptionskip}{-0.4cm}   
\setlength{\belowcaptionskip}{0cm}   
\begin{center}
\begin{picture}(-5,100)
\setlength{\unitlength}{1mm}
\put(-30,30){\line(1,0){20}}
\put(-30,25){\line(1,0){20}}
\put(-30,20){\line(1,0){20}}
\put(-30,15){\line(1,0){10}}
\put(-30,10){\line(1,0){5}}
\put(-30,10){\line(0,1){20}}
\put(-25,10){\line(0,1){20}}
\put(-20,15){\line(0,1){15}}
\put(-15,20){\line(0,1){10}}
\put(-10,20){\line(0,1){10}}

\put(20,30){\line(1,0){10}}
\put(20,25){\line(1,0){10}}
\put(20,20){\line(1,0){10}}
\put(20,15){\line(1,0){5}}
\put(20,10){\line(1,0){5}}
\put(20,10){\line(0,1){20}}
\put(25,10){\line(0,1){20}}
\put(30,20){\line(0,1){10}}

\thicklines

\put(-25,15){\circle*{1.5}}
\put(-20,20){\circle*{1.5}}
\put(25,20){\circle*{1.5}}

\put(-30,10){\rotatebox{45}{\hdashrule{2.9cm}{1pt}{3pt 1pt}}}
\put(20,10){\rotatebox{64}{\hdashrule{2.3cm}{1pt}{3pt 1pt}}}
\end{picture}
\caption{Balanced Young diagrams.}\label{balanced}
\end{center}
\end{figure}

When $\lambda$ is a balanced shape, Chan, Haddadan, Hopkins and Moci  \cite[Corollary 3.8]{Chan-2} obtained   the following formula for any toggle-symmetric
distribution.

\begin{thm}\label{JD}
For a balanced Young diagram $\lambda$  with $r$ rows  and $c$ columns and for any toggle-symmetric
distribution, the expected jaggedness of a subshape of $\lambda$ equals
\begin{equation*}
\frac{2rc}{r+c}.
\end{equation*}
\end{thm}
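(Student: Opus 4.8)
The plan is to combine linearity of expectation with the toggle-symmetry hypothesis, and then to evaluate the resulting sum by telescoping along the diagonals of $\lambda$, the telescoping collapsing to a closed form precisely because $\lambda$ is balanced. First, since the jaggedness of a subshape $\mu$ is the total number of squares of $\lambda$ that can be toggled into or out of $\mu$, one has $\mathrm{jag}(\mu)=\sum_{p}\bigl(\mathcal{T}_p^+(\mu)+\mathcal{T}_p^-(\mu)\bigr)$, the sum over all squares $p$ of $\lambda$. Taking expectations and using that $\mathbb{E}[\mathcal{T}_p^+]=\mathbb{E}[\mathcal{T}_p^-]$ for every $p$ — which is exactly toggle-symmetry — we obtain $\mathbb{E}[\mathrm{jag}]=2\sum_{p}\mathbb{E}[\mathcal{T}_p^-]=2\cdot(\text{expected number of corners of }\mu)$; equivalently, the expected number of left turns of the boundary lattice path equals the expected number of right turns. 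Hence it suffices to prove $\sum_{u\in\lambda}\mathbb{P}[u\text{ is a corner of }\mu]=\tfrac{rc}{r+c}$.

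For the second step I would set up a telescoping recursion for these corner-probabilities. For a square $u=(i,j)$ of $\lambda$ put $f_u=\mathbb{P}[u\in\mu]$ and $g_u=\mathbb{P}[u\text{ is a corner of }\mu]=\mathbb{P}[u\in\mu,\ (i+1,j)\notin\mu,\ (i,j+1)\notin\mu]$. When $(i+1,j)$, $(i,j+1)$ and $(i+1,j+1)$ all lie in $\lambda$, inclusion–exclusion — using that each of $(i+1,j)\in\mu$ and $(i,j+1)\in\mu$ forces $u\in\mu$, since $\mu$ is an order ideal — gives $g_u=f_{(i,j)}-f_{(i+1,j)}-f_{(i,j+1)}+\mathbb{P}[(i+1,j)\in\mu,\ (i,j+1)\in\mu]$. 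Splitting the last probability according to whether $(i+1,j+1)\in\mu$, it equals $f_{(i+1,j+1)}+\mathbb{P}[(i+1,j+1)\text{ is a proper outside corner of }\mu]$, and applying toggle-symmetry to the square $(i+1,j+1)$ turns the second summand into $g_{(i+1,j+1)}$. This produces the recursion $g_{(i,j)}-g_{(i+1,j+1)}=f_{(i,j)}-f_{(i+1,j)}-f_{(i,j+1)}+f_{(i+1,j+1)}$, valid whenever $(i+1,j+1)\in\lambda$. Iterating it telescopes $g_u$ along each northwest-to-southeast diagonal of $\lambda$, all the way down to the southeasternmost square of that diagonal.

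Summing the telescoped identities over all diagonals then expresses $\sum_{u\in\lambda}g_u$ as a fixed linear combination of the marginals $f_u$ together with boundary terms supported on the southeasternmost squares of the diagonals. This is where the hypothesis that $\lambda$ is balanced is used in an essential way: because every outward corner of $\lambda$ lies on the main antidiagonal, the diagonals of $\lambda$ sit symmetrically enough about that antidiagonal for all dependence on the particular toggle-symmetric distribution to cancel in the sum, so $\sum_u g_u$ is forced to be a number depending only on $r$ and $c$. To identify that number one may specialize to the uniform distribution on subshapes: for a rectangle a direct count gives $\mathbb{P}[\text{the path turns between its }i\text{th and }(i+1)\text{th steps}]=\tfrac{2rc}{(r+c)(r+c-1)}$ for each $i$, hence expected jaggedness $\tfrac{2rc}{r+c}$, and the balanced structure yields the same value $\tfrac{2rc}{r+c}$ for an arbitrary balanced $\lambda$. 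Therefore $\sum_u g_u=\tfrac{rc}{r+c}$ and $\mathbb{E}[\mathrm{jag}]=\tfrac{2rc}{r+c}$.

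The hard part is the third step, and within it the boundary bookkeeping: one must keep track of the squares $u=(i,j)\in\lambda$ with $(i+1,j+1)\notin\lambda$ (where the recursion terminates) and of the squares in the first row and first column (where the toggling conventions must be matched with the corner and outside-corner conventions), and then verify that for a balanced $\lambda$ all these boundary contributions combine with the interior second-difference terms $f_{(i,j)}-f_{(i+1,j)}-f_{(i,j+1)}+f_{(i+1,j+1)}$ into the distribution-free constant $\tfrac{rc}{r+c}$. Everything else is formal manipulation with indicator random variables. This last computation is exactly the content of \cite[Corollary 3.8]{Chan-2}, on which we rely.
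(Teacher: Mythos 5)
The paper offers no proof of this theorem: it is quoted directly from Chan--Haddadan--Hopkins--Moci \cite[Corollary 3.8]{Chan-2}, so there is no internal argument to measure you against. Judged on its own terms, your proposal gets the two preliminary reductions right, and they are indeed the engine of the cited proof: writing $\mathrm{jag}(\mu)=\sum_p\bigl(\mathcal{T}_p^+(\mu)+\mathcal{T}_p^-(\mu)\bigr)$ and invoking toggle-symmetry to get $\mathbb{E}[\mathrm{jag}]=2\sum_u g_u$ is correct, and so is the transport identity $g_{(i,j)}-g_{(i+1,j+1)}=f_{(i,j)}-f_{(i+1,j)}-f_{(i,j+1)}+f_{(i+1,j+1)}$: the inclusion--exclusion uses the order-ideal property correctly, and toggle-symmetry at $(i+1,j+1)$ correctly converts $\mathbb{P}[(i+1,j)\in\mu,\ (i,j+1)\in\mu,\ (i+1,j+1)\notin\mu]$ into $g_{(i+1,j+1)}$.

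The gap is that everything after that is asserted rather than proved, and it is exactly where the theorem lives. That the telescoped boundary terms combine, for a balanced shape, into a quantity independent of the particular toggle-symmetric distribution is the whole content of the statement; you declare it (``the diagonals sit symmetrically enough\ldots for all dependence\ldots to cancel'') and then write that the last computation ``is exactly the content of \cite[Corollary 3.8]{Chan-2}, on which we rely'' --- but Corollary 3.8 \emph{is} the statement being proved, so as written the argument is circular at the decisive step. Two further remarks: specializing to the uniform distribution to identify the constant is legitimate only \emph{after} distribution-independence is established (the uniform distribution on order ideals is toggle-symmetric, via the bijection that adds or removes $p$, so this part is fine in principle); and your explicit per-position probability $\frac{2rc}{(r+c)(r+c-1)}$ is computed only for a rectangle, not for a general balanced shape, where the diagonal boundary bookkeeping and the corner versus proper-outside-corner conventions are precisely the delicate part. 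If your intent is simply to cite \cite{Chan-2} for the theorem, that matches what the paper does; if your intent is to prove it, the decisive computation is missing.
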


We conclude this section with a description of a specific
toggle-symmetric distribution, called  the weak distribution,
see \cite[Definition 2.2]{Chan-2}, which is closely related to
the enumeration of barely set-valued tableaux.
The weak distribution  is defined on reverse plane partitions.
Recall that a reverse plane partition of shape $\lambda$ is
 an assignment of nonnegative integers into the squares  of $\lambda$
such that the integers in each row and each column are weakly increasing, see Stanley \cite[Chapter 7]{Stanley}.
Given a positive integer $k$,
let $\mathrm{RPP}(\lambda, k)$ denote the set of reverse plane partitions of shape $\lambda$ with every entry not exceeding $k$.

To define the weak distribution, consider
the pairs
$(P,i)$ with $P\in \mathrm{RPP}(\lambda, k)$ and $i\in \{1,2,\ldots,k\}$. A pair $(P,i)$  determines a subshape of $\lambda$,  denoted $\alpha(P,i)$, which consists of squares of $P$ occupied by the entries   strictly less than $i$. The subshape $\alpha(P,i)$ is also called an induced subshape.
Let
\[Q(\lambda,k)=\{(P,i)\,|\, P\in \mathrm{RPP}(\lambda, k), 1 \leq i\leq k\}.\]
Assume that the pairs $(P, i)$ in $Q(\lambda, k)$ are
generated uniformly. Then we are led to a distribution of
subshapes of $\lambda$. More precisely, among all the
subshapes $\alpha(P,i)$ generated by the pairs in $Q(
\lambda, k)$,
a subshape $\mu$ occurs with probability
\begin{align}\label{PPB}
\frac{|\{(P,i)\in Q(\lambda, k)\,|\,\alpha(P,i)=\mu\}|}{|Q(\lambda, k)|}.
\end{align}
The distribution defined in \eqref{PPB} is called the weak
distribution on the set of subshapes of $\lambda$.

Chan, Haddadan, Hopkins and Moci \cite[Lemma 2.8]{Chan-2} showed that the weak distribution is indeed a toggle-symmetric
distribution. Hence, in the case when $\lambda$ is a balanced shape, the expected jaggedness    under the weak
distribution can be computed by the formula in Theorem \ref{JD}, and so the following relation holds.

\begin{thm}\label{JD1}
For  a balanced shape $\lambda$ with $r$
rows and $c$ columns,
we have
\begin{equation*}
\frac{\sum_{\mu}|\{(P,i)\in Q(\lambda, k)\,|\,\alpha(P,i)=\mu\}|\mathrm{jag}(\mu)}{|Q(\lambda, k)|}=\frac{2rc}{r+c},
\end{equation*}
where $\mu$ ranges over the subshapes of $\lambda$.
\end{thm}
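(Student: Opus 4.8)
The plan is to recognize that Theorem \ref{JD1} is an immediate specialization of Theorem \ref{JD}. The quantity on the left-hand side is, by definition, nothing but the expected value of the jaggedness statistic $\mathrm{jag}$ when a subshape $\mu$ of $\lambda$ is drawn according to the weak distribution \eqref{PPB}. Indeed, grouping the pairs $(P,i)\in Q(\lambda,k)$ according to the induced subshape $\alpha(P,i)$, one sees that the probability assigned to $\mu$ is $|\{(P,i)\in Q(\lambda,k)\,:\,\alpha(P,i)=\mu\}|/|Q(\lambda,k)|$; since every pair determines exactly one induced subshape, these probabilities sum to $1$ over all subshapes $\mu$ of $\lambda$ (those $\mu$ never realized as some $\alpha(P,i)$ simply receive probability $0$ and contribute nothing to the sum). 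Hence the number
\[
\sum_{\mu}\frac{|\{(P,i)\in Q(\lambda,k)\,:\,\alpha(P,i)=\mu\}|}{|Q(\lambda,k)|}\,\mathrm{jag}(\mu)
\]
is exactly the expected jaggedness of a subshape of $\lambda$ under the weak distribution.

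Next I would invoke \cite[Lemma 2.8]{Chan-2}, which guarantees that the weak distribution on the subshapes of $\lambda$ is a toggle-symmetric distribution in the sense recalled in this section. Since $\lambda$ is assumed to be a balanced shape with $r$ rows and $c$ columns, Theorem \ref{JD} applies to this particular toggle-symmetric distribution and tells us that the expected jaggedness equals $2rc/(r+c)$. Pulling the constant $1/|Q(\lambda,k)|$ out of the sum in the display above then yields precisely the identity asserted in Theorem \ref{JD1}.

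There is essentially no obstacle here: all of the mathematical content is already packaged in Theorem \ref{JD} (the Chan--Haddadan--Hopkins--Moci closed form for balanced shapes) together with the fact that the weak distribution is toggle-symmetric, both of which are quoted from \cite{Chan-2}. The only points requiring (routine) care are to check that \eqref{PPB} really is a probability distribution, i.e.\ that the counts $|\{(P,i)\in Q(\lambda,k)\,:\,\alpha(P,i)=\mu\}|$ sum to $|Q(\lambda,k)|$ as $\mu$ ranges over all subshapes of $\lambda$, and to handle correctly the subshapes of $\lambda$ that are never realized as an induced subshape.
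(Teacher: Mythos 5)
Your proposal is correct and follows exactly the paper's route: the left-hand side is by definition the expected jaggedness under the weak distribution, which is toggle-symmetric by \cite[Lemma 2.8]{Chan-2}, so Theorem \ref{JD} applies to the balanced shape $\lambda$. This is precisely how the paper derives Theorem \ref{JD1}.
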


\section{Proof of the conjecture}

In this section, we present a proof of  the conjecture of
Reiner, Tenner and Yong.
First, we establish the following relation on
$|{\mathrm{BSSYT}}(\lambda,k)|$ and $|{\mathrm{SYT}}(\lambda,k)|$
 for a balanced shape $\lambda$.

\begin{thm}\label{Gen-1}
For any positive integer $k$ and a balanced shape $\lambda$ with $r$ rows and $c$ columns, we have
\begin{equation}\label{3-1}
|{\mathrm{BSSYT}}(\lambda,k)|=\frac{krc}{(r+c)}
  |{\mathrm{SYT}}(\lambda,k)|.
\end{equation}
\end{thm}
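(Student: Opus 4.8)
The strategy is to combine the two closed-form expressions for the expected jaggedness of a subshape of $\lambda$ under the weak distribution. On the one hand, Theorem~\ref{JD1} gives that this expectation equals $2rc/(r+c)$ for a balanced shape with $r$ rows and $c$ columns. On the other hand, I would establish, for an \emph{arbitrary} shape $\lambda$, that the same expectation equals $2|\mathrm{BSSYT}(\lambda,k)|/(k\,|\mathrm{SYT}(\lambda,k)|)$. Equating the two immediately yields \eqref{3-1}. So the real content is the second identity, which does not use balancedness at all.

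To prove that identity, I would first rewrite the numerator of the weak-distribution expectation. By Theorem~\ref{JD1}'s left-hand side, the expected jaggedness is $\bigl(\sum_{\mu}|\{(P,i)\in Q(\lambda,k):\alpha(P,i)=\mu\}|\,\mathrm{jag}(\mu)\bigr)/|Q(\lambda,k)|$, and since every pair $(P,i)\in Q(\lambda,k)$ induces exactly one subshape $\alpha(P,i)$, the numerator simplifies to $\sum_{(P,i)\in Q(\lambda,k)} \mathrm{jag}\bigl(\alpha(P,i)\bigr)$. Also $|Q(\lambda,k)| = k\,|\mathrm{RPP}(\lambda,k)|$. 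Next I would use the standard bijection (essentially a flagging-by-$\phi=(k+1,k+2,\dots)$ version of the classical correspondence) showing $|\mathrm{RPP}(\lambda,k)| = |\mathrm{SYT}(\lambda,k)|$: adding $i$ to the entry in row $i$ of a reverse plane partition with entries $\le k$ turns it into a semistandard tableau with row-$i$ entries $\le k+i$, and conversely. Thus the denominator becomes $k\,|\mathrm{SYT}(\lambda,k)|$, and it remains to show
\[
\sum_{(P,i)\in Q(\lambda,k)} \mathrm{jag}\bigl(\alpha(P,i)\bigr) = 2\,|\mathrm{BSSYT}(\lambda,k)|.
\]

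The heart of the argument — and the step I expect to be the main obstacle — is this last equality, which should follow from the promised representation of a barely set-valued tableau of shape $\lambda$ as a reverse plane partition of shape $\lambda$ together with a designated corner of an induced subshape. Concretely, I would build a map sending a barely set-valued tableau $T\in\mathrm{BSSYT}(\lambda,k)$ to a triple consisting of a reverse plane partition $P$ (obtained by deleting the extra entry from the doubly-occupied square and then subtracting $i$ from row $i$), an index $i$, and a distinguished square on the boundary of $\alpha(P,i)$ recording where the second value sat and whether it was the smaller or the larger of the two entries in its box. The factor of $2$ arises precisely because each box-plus-boundary-position datum can be realized by a barely set-valued tableau in two ways — the extra integer can be the new maximum of a box that is a corner of the induced subshape, or the new minimum of a box that is a proper outside corner — and $\mathrm{jag}(\alpha(P,i))$ counts exactly these corners and proper outside corners. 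Verifying that this correspondence is a bijection requires care: one must check that the semistandard (weakly-increasing-rows, strictly-increasing-columns) condition on $T$, together with the flag $\phi_i = k+i$, translates exactly into the condition that the modified entry lies at a corner or proper outside corner of $\alpha(P,i)$ with $1\le i\le k$, and that no information is lost. Once this bijection is in hand, summing over all $(P,i)$ and matching terms completes the proof; restricting to a rectangular staircase $\delta_d(b^a)$, which Chan--Haddadan--Hopkins--Moci observed is balanced with $r=d\cdot a\cdot(?)$ rows and the appropriate column count so that $rc/(r+c) = ab(d-1)/(a+b)$, recovers Conjecture~\ref{Conj-1}.
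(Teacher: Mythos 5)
Your proposal is correct and follows essentially the same route as the paper: the paper likewise equates the Chan--Haddadan--Hopkins--Moci value $2rc/(r+c)$ with the weak-distribution expectation computed as $2|\mathrm{BSSYT}(\lambda,k)|/(k|\mathrm{SYT}(\lambda,k)|)$, obtaining the factor $2$ from two separate bijections (one sending a barely set-valued tableau to a triple with a designated corner, by deleting the larger extra entry, and one to a triple with a designated proper outside corner, by deleting the smaller one). The only nit is your phrasing that each datum is ``realized in two ways''; the correct bookkeeping is that each corner-type and each outside-corner-type datum yields exactly one barely set-valued tableau and each such tableau arises from exactly one datum of each type, which is what makes the corner sum and the outside-corner sum each equal to $|\mathrm{BSSYT}(\lambda,k)|$.
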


As observed by Chan, Haddadan, Hopkins and Moci \cite{Chan-2},
 a rectangular staircase shape is a balanced shape.
  Moreover, a rectangular staircase
shape $\delta_d(b^a)$ has $a(d-1)$ rows and $b(d-1)$ columns.
Thus Theorem \ref{Gen-1} specializes to Conjecture \ref{Conj-1}.

To prove Theorem \ref{Gen-1},
we find the following representation of
 a barely set-valued tableau.

\begin{thm}\label{thm-subshape}
A barely set-valued tableau in ${\mathrm{BSSYT}}(\lambda,k)$ can be uniquely represented by a reverse
plane partition $P$ in ${\mathrm{RPP}}(\lambda,k)$ and an integer $i$ $(1\leq i\leq k)$ together with a  designated corner of the induced subshape $\alpha(P, i)$.
\end{thm}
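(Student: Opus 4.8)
The plan is to construct an explicit bijection between $\mathrm{BSSYT}(\lambda,k)$ and the set of triples $(P,i,\gamma)$ with $P\in\mathrm{RPP}(\lambda,k)$, $1\le i\le k$, and $\gamma$ a corner of the induced subshape $\alpha(P,i)$. The workhorse is the standard bijection between $\mathrm{SYT}(\lambda,k)$ and $\mathrm{RPP}(\lambda,k)$ obtained by subtracting the row index: if $U$ is a semistandard tableau of shape $\lambda$ with every entry of row $m$ at most $k+m$, then $P$ defined by $P(m,j)=U(m,j)-m$ is a reverse plane partition of shape $\lambda$ with entries in $\{0,1,\dots,k\}$, because weak increase along rows is preserved while strict increase down columns of $U$ becomes weak increase down columns of $P$; the inverse is $U(m,j)=P(m,j)+m$. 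To describe the forward map, let $T\in\mathrm{BSSYT}(\lambda,k)$ and let $s=(p,q)$ be the unique square of $T$ holding two values, say $\{a,a'\}$ with $a<a'$. Deleting the larger value $a'$ from $s$ produces a single-valued filling $T_-$; away from $s$ nothing changes, and around $s$ the inequalities forced by $T$ (the set to the right of $s$, a singleton since $T$ is barely set-valued, has minimum $\ge a'$, and the set below $s$ has minimum $>a'$) show $T_-\in\mathrm{SYT}(\lambda,k)$. Let $P\in\mathrm{RPP}(\lambda,k)$ be the image of $T_-$, so $P(p,q)=a-p$; then set $i:=a'-p$ and $\gamma:=s$.

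Next I would check that $(P,i,\gamma)$ is admissible. Iterating column-strictness of $T$ up the $q$-th column shows the minimum entry of any set in row $p$ is at least $p$; hence $a\ge p$ and $a'\ge p+1$, so $i\ge 1$, while the flag bound $a'\le k+p$ gives $i\le k$. From $P(p,q)=a-p<a'-p=i$ we see $s\in\alpha(P,i)$. If the square below $s$ lies in $\lambda$, it carries a single value $e\ge a'+1$, so $P(p+1,q)=e-(p+1)\ge a'-p=i$ and $(p+1,q)\notin\alpha(P,i)$; if the square to the right of $s$ lies in $\lambda$, it carries a single value $f\ge a'$, so $P(p,q+1)=f-p\ge i$ and $(p,q+1)\notin\alpha(P,i)$. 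Thus $s$ is a corner of $\alpha(P,i)$, and the map $T\mapsto(P,i,\gamma)$ is well defined.

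For the inverse, let $(P,i,\gamma)$ be such a triple with $\gamma=(p,q)$. Form $T'\in\mathrm{SYT}(\lambda,k)$ by $T'(m,j)=P(m,j)+m$, and adjoin the extra entry $a':=i+p$ to the square $(p,q)$, which already holds $a:=P(p,q)+p$. Since $\gamma\in\alpha(P,i)$ we have $a<a'$; since $i\le k$ we have $a'\le k+p$, so the flag is respected; and since $\gamma$ is a corner of $\alpha(P,i)$, its right neighbour (when in $\lambda$) has $P$-value $\ge i$ and its lower neighbour (when in $\lambda$) has $P$-value $\ge i$, which are precisely the inequalities needed for the two-element set $\{a,a'\}$ to be no larger than the set to its right and strictly smaller than the set below it in the order on sets of integers; the upper and left neighbours give no obstruction because $P$ is weakly increasing along rows and columns. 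Hence the resulting filling lies in $\mathrm{BSSYT}(\lambda,k)$, with $(p,q)$ as its unique two-element square.

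Finally I would verify that the two maps are mutually inverse, which is a routine unwinding: deleting $a'$ and then restoring $i+p=a'$ recovers $T$, while in the other order one has $T_-=T'$, hence $P$ is recovered, $i$ is recovered from $a'-p$, and $\gamma$ is recovered as the unique two-element square. I expect the only point needing care, rather than real difficulty, to be the boundary bookkeeping: showing that the set-valued square of a barely set-valued tableau is forced to sit at a corner of $\alpha(P,i)$, and conversely that the corner hypothesis is exactly what licenses the insertion of a second entry. The normalization ``delete the larger entry and shift the row index by $a'-p$'' is pinned down already by the one-box case $\lambda=(1)$, where both $\mathrm{BSSYT}((1),k)$ and the set of triples number $\binom{k+1}{2}$ and the correspondence $\{a,a'\}\leftrightarrow(a-1,\,a'-1)$ is forced.
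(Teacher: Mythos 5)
Your proof is correct and follows essentially the same route as the paper: subtract the row index, delete the larger of the two entries in the unique doubly-occupied square to obtain $P$, record $i$ as the shifted value of that deleted entry, and designate that square as the corner of $\alpha(P,i)$, with the inverse map re-inserting $i$ at the designated corner and re-adding the row index. Your write-up is in fact somewhat more explicit than the paper's in verifying the corner condition and the admissibility of the inverse, but there is no substantive difference in approach.
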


\begin{proof}
Let $T$ be a barely set-valued   tableau in
${\mathrm{BSSYT}}(\lambda,k)$.
We aim to construct a reverse plane partition $P\in {\mathrm{RPP}}(\lambda,k)$, an integer $i$ $(1\leq i\leq k)$ and a corner $C$ in the induced subshape $\alpha(P, i)$.
For each entry in $T$, if it is in the $t$-th row, then
subtract it by
$t$. This results in a tableau $T'$ with every entry
not exceeding $k$ in which each row and each column
are weakly increasing.
Assume that $B$ is the  square of $T$ containing two entries, say, $a$ and $b$ with $a<b$, and assume that
$B$ is in the $r$-th row  of $T$.
By the above operation,  the  entries of $T'$
in the square $B$ are $a-r$ and $b-r$.
Define
$P$ to be the reverse plane partition in $\mathrm{RPP}(\lambda, k)$  obtained from $T'$ by deleting the entry $b-r$ in $B$.

We next proceed to determine the integer $i$ and the corner $C$ in the induced subshape $\alpha(P, i)$,
from which we can recover the deleted entry $b-r$ in the  tableau $T'$.
Notice that $r\leq a<b$. So we have $1\leq b-r\leq k$.
Set $i=b-r$.

We may choose the corner $C$ of  $\alpha(P, b-r)$ to be the square $B$. This is feasible because it can be shown that
the square $B$ is a corner of $\alpha(P, b-r)$.
Keep in mind that the
subshape $\alpha(P, b-r)$ consists of the squares of
$P$ occupied by the entries smaller than $b-r$.
Note that the entry in the square $B$ of $P$ is $a-r$.
Since $a-r<b-r$, the square $B$
must be a square of the subshape $\alpha(P, b-r)$. To verify that $B$ is a corner of $\alpha(P, b-r)$, we need to check that
if  $B'$ is a square  of $\lambda$  just to the right of $B$
or just below $B$, then $B'$ does not belong to $\alpha(P, b-r)$, or, equivalently, the entry of $P$ in  $B'$
is bigger than or equal to $b-r$. This is obvious
owing to  the construction of $P$. Thus $C$ is indeed a corner
of $\alpha(P, b-r)$.

To show that the above construction is reversible, we give a brief description of the reverse procedure. Given a reverse plane partition $P$ in $ \mathrm{RPP}(\lambda,k)$ together with an integer $1\leq i\leq k$ and a corner $C$ of $\alpha(P,i)$,  we shall recover a barely set-valued
tableau
$T$ in $\mathrm{BSSYT}(\lambda, k)$ as follows.
Let $T'$
be the tableau obtained from $P$ by joining the entry $i$ into the square $C$ so that the square $C$ has two entries.
 Increase each entry in $T'$ by $t$ if it is in the $t$-th row of $T'$. Let $T$ denote the resulting tableau.
It is easily
verified  that $T$ is a barely set-valued tableau in
$\mathrm{BSSYT}(\lambda, k)$. This completes the proof.
\end{proof}

Figure \ref{g-1} illustrates the  construction
of the representation of a barely set-valued
tableau $T$ in ${\mathrm{BSSYT}}(\lambda,k)$ with $\lambda=(4,4,2,1)$ and $k=2$, where the subshape $\alpha(P,2)$ is determined  by the lattice path in $\lambda$ drawn with thick line.

\begin{figure}[h]
\begin{center}
\begin{picture}(80,100)
\setlength{\unitlength}{1mm}
\put(-50,30){\line(1,0){20}}
\put(-50,25){\line(1,0){20}}
\put(-50,20){\line(1,0){20}}
\put(-50,15){\line(1,0){10}}
\put(-50,10){\line(1,0){5}}
\put(-50,10){\line(0,1){20}}
\put(-45,10){\line(0,1){20}}
\put(-40,15){\line(0,1){15}}
\put(-35,20){\line(0,1){10}}
\put(-30,20){\line(0,1){10}}
\put(-48.5,26.5){\small{$1$}}
\put(-43.5,26.5){\small{$1$}}
\put(-38.5,26.5){\small{$2$}}
\put(-33.5,26.5){\small{$2$}}
\put(-49.5,21.5){\small{$24$}}
\put(-43.5,21.5){\small{$4$}}
\put(-38.5,21.5){\small{$4$}}
\put(-33.5,21.5){\small{$4$}}
\put(-48.5,16.5){\small{$5$}}
\put(-43.5,16.5){\small{$5$}}
\put(-48.5,11.5){\small{$6$}}

\put(-26,18.5){{$\xlongleftrightarrow{\quad}$}}

\put(-10,30){\line(1,0){20}}
\put(-10,25){\line(1,0){20}}
\put(-10,20){\line(1,0){20}}
\put(-10,15){\line(1,0){10}}
\put(-10,10){\line(1,0){5}}
\put(-10,10){\line(0,1){20}}
\put(-5,10){\line(0,1){20}}
\put(0,15){\line(0,1){15}}
\put(5,20){\line(0,1){10}}
\put(10,20){\line(0,1){10}}
\put(-8.5,26.5){\small{$0$}}
\put(-3.5,26.5){\small{$0$}}
\put(1.5,26.5){\small{$1$}}
\put(6.5,26.5){\small{$1$}}
\put(-9.5,21.5){\small{$0$}}
\put(-7.5,21.5){\small{$2$}}
\put(-3.5,21.5){\small{$2$}}
\put(1.5,21.5){\small{$2$}}
\put(6.5,21.5){\small{$2$}}
\put(-8.5,16.5){\small{$2$}}
\put(-3.5,16.5){\small{$2$}}
\put(-8.5,11.5){\small{$2
$}}

\put(14,18.5){{$\xlongleftrightarrow{\quad}$}}

\put(34,30){\line(1,0){20}}
\put(34,25){\line(1,0){20}}
\put(34,20){\line(1,0){20}}
\put(34,15){\line(1,0){10}}
\put(34,10){\line(1,0){5}}
\put(34,10){\line(0,1){20}}
\put(39,10){\line(0,1){20}}
\put(44,15){\line(0,1){15}}
\put(49,20){\line(0,1){10}}
\put(54,20){\line(0,1){10}}

\put(30,6){\line(0,1){28}}
\put(30,6){\line(1,0){3}}
\put(30,34){\line(1,0){3}}

\put(35.5,26.5){\small{$0$}}
\put(40.5,26.5){\small{$0$}}
\put(45.5,26.5){\small{$1$}}
\put(50.5,26.5){\small{$1$}}
\put(35.5,21.5){\small{$0$}}
\put(40.5,21.5){\small{$2$}}
\put(45.5,21.5){\small{$2$}}
\put(50.5,21.5){\small{$2$}}
\put(35.5,16.5){\small{$2$}}
\put(40.5,16.5){\small{$2$}}
\put(35.5,11.5){\small{$2$}}

\put(60,30){\line(1,0){20}}
\put(60,25){\line(1,0){20}}
\put(60,20){\line(1,0){20}}
\put(60,15){\line(1,0){10}}
\put(60,10){\line(1,0){5}}
\put(60,10){\line(0,1){20}}
\put(65,10){\line(0,1){20}}
\put(70,15){\line(0,1){15}}
\put(75,20){\line(0,1){10}}
\put(80,20){\line(0,1){10}}

\put(84,6){\line(0,1){28}}
\put(84,6){\line(-1,0){3}}
\put(84,34){\line(-1,0){3}}
\put(61,21.2){\footnotesize{$C$}}
\linethickness{2pt}
\put(60,10){\line(0,1){10.3}}
\put(60,20){\line(1,0){5.3}}
\put(65,20){\line(0,1){5.3}}
\put(65,25){\line(1,0){15.3}}
\put(80,25){\line(0,1){5.3}}

\put(-40,0){$T$}
\put(0,0){$T'$}
\put(45,0){$(P,\alpha(P,2),C)$}
\end{picture}
\caption{An illustration of the representation in Theorem \ref{thm-subshape}.}\label{g-1}
\end{center}
\end{figure}

In the spirit of Theorem \ref{thm-subshape},
we have an alternative representation of
 a barely set-valued tableau involving a designated proper outside corner.

\begin{thm}\label{thm-subshape'}
A barely set-valued tableau in ${\mathrm{BSSYT}}(\lambda,k)$ can be uniquely represented by a reverse
plane partition $Q$ in ${\mathrm{RPP}}(\lambda,k)$ and an integer $j$ $(1\leq j\leq k)$ together with a  designated
proper outside corner
 of the induced subshape $\alpha(Q, j)$.
\end{thm}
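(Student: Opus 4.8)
The plan is to run the same construction as in the proof of Theorem \ref{thm-subshape}, but to keep the \emph{larger} of the two entries sitting in the doubled square instead of the smaller one. Let $T\in\mathrm{BSSYT}(\lambda,k)$, and form $T'$ as before by subtracting $t$ from every entry in row $t$; then $T'$ has entries in $\{0,1,\dots,k\}$, weakly increasing along rows and along columns. Let $B$ be the unique doubled square, lying in row $r$, and let its two entries in $T'$ be $u<v$, so that $u=a-r\ge 0$ and $v=b-r$ satisfy $1\le v\le k$ by the flag condition $b\le k+r$. Define $Q\in\mathrm{RPP}(\lambda,k)$ to be the array obtained from $T'$ by deleting the smaller entry $u$ at $B$ (this is again a reverse plane partition with entries $\le k$, since the square to the left of and above $B$ had $T'$-entry $\le u\le v$ and those to the right and below had $T'$-entry $\ge v$), put $j:=u+1$, and let the designated square be $B$ itself; since $0\le u\le v-1\le k-1$ we have $1\le j\le k$.

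The first thing to verify is that $B$ is a proper outside corner of $\alpha(Q,j)$. Being a square of $\lambda$, it is proper. Its $Q$-entry equals $v\ge u+1=j$, so $B\notin\alpha(Q,j)$; meanwhile the square directly above $B$ (if $B$ is not in the first row) and the square directly to the left of $B$ (if $B$ is not in the first column) have $Q$-entry at most $u<j$, because the columns and rows of $T'$ are weakly increasing and $B$ carries the value $u$, so these squares lie in $\alpha(Q,j)$. Together with the convention for outside corners along the first row and first column, and the fact that the entries of $Q$ weakly increase along the first row and first column of $\alpha(Q,j)$, this is exactly the assertion that $\alpha(Q,j)\cup\{B\}$ is again a Young diagram, i.e.\ that $B$ is an outside corner of $\alpha(Q,j)$. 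Thus $T\mapsto(Q,j,B)$ is a well-defined map.

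For the reverse map, start from $Q\in\mathrm{RPP}(\lambda,k)$, an integer $1\le j\le k$, and a proper outside corner $D$ of $\alpha(Q,j)$, with $Q$-entry $w$ at $D$; since $D\notin\alpha(Q,j)$ we have $w\ge j>j-1\ge 0$. Adjoin the entry $j-1$ to $D$ to obtain an array $Q'$ whose only doubled square is $D$. The array $Q'$ still has weakly increasing rows and columns: the only new comparisons involve $D$, and they hold because the squares above and to the left of $D$ have $Q$-entry $\le j-1$ while those below and to the right have $Q$-entry $\ge w$. Now add $t$ to every entry in row $t$; rows stay weakly increasing and columns become strictly increasing, so the result $T$ is a barely set-valued semistandard Young tableau, and since the row-$t$ entries of $Q'$ are at most $k$, those of $T$ are at most $k+t$, whence $T\in\mathrm{BSSYT}(\lambda,k)$.

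Finally, these two maps are mutually inverse, which is a direct check: applied to the $T$ just produced, the forward map recovers $T'=Q'$, reads off the doubled square $D$ with entries $j-1<w$, deletes the smaller one to return $Q$, and records the integer $(j-1)+1=j$ together with $D$; conversely, starting from $T$, the forward map removes $u$ from $B$ and the reverse map re-adjoins $u=j-1$ and re-adds the row shifts, giving back $T$. I do not expect a real obstacle here: the argument is genuinely dual to that of Theorem \ref{thm-subshape} — deleting the smaller rather than the larger entry of the doubled square — and in fact it shows that every $T\in\mathrm{BSSYT}(\lambda,k)$ carries both representations at once. The only point requiring a little care is the boundary bookkeeping: checking that when $B$ (equivalently $D$) lies in the first row or first column of $\lambda$, the incidences established above still certify it as an outside corner under the stated convention.
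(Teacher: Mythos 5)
Your construction is exactly the paper's: form $T'$ by the row shifts, delete the \emph{smaller} entry $a-r$ of the doubled square $B$ to get $Q$, set $j=a-r+1$, and designate $B$ as a proper outside corner of $\alpha(Q,j)$. The paper only sketches this (deferring to the proof of Theorem \ref{thm-subshape}); your write-up supplies the same argument with the verifications made explicit, and it is correct.
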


\begin{proof}
The proof is similar to that of
Theorem \ref{thm-subshape}, and so we only give a description of the construction from a  barely set-valued tableau $T$ in ${\mathrm{BSSYT}}(\lambda,k)$ to a reverse
plane partition $Q$ in ${\mathrm{RPP}}(\lambda,k)$ and an integer $j$ $(1\leq j\leq k)$ together with a  designated
proper outside corner $C'$ of $\alpha(Q, j)$.

Let $T'$ be the tableau as constructed in the proof of Theorem
\ref{thm-subshape}.
Define $Q$ to be the reverse plane partition in $\mathrm{RPP}(\lambda, k)$  obtained from $T'$ by deleting the entry $a-r$ in $B$.
Set $j=a-r+1$. It can be verified that $B$
is a proper outside corner of $\alpha(Q, a-r+1)$.
Then choose $C'$ to be the proper outside corner $B$. This completes the proof.
\end{proof}

 Figure \ref{g-2} is an illustration of Theorem
\ref{thm-subshape'}, where $T$ is    a barely set-valued tableau in $ {\mathrm{BSSYT}}(\lambda,k)$ with $\lambda=(4,4,2,1)$
and $k=2$.

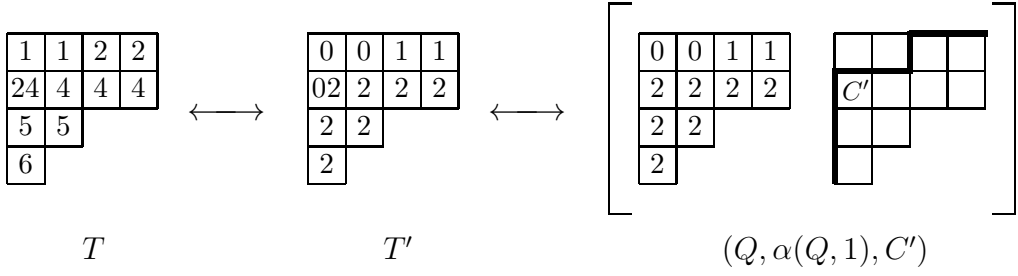
\begin{figure}[h]
\begin{center}
\begin{picture}(80,100)
\setlength{\unitlength}{1mm}
\put(-50,30){\line(1,0){20}}
\put(-50,25){\line(1,0){20}}
\put(-50,20){\line(1,0){20}}
\put(-50,15){\line(1,0){10}}
\put(-50,10){\line(1,0){5}}
\put(-50,10){\line(0,1){20}}
\put(-45,10){\line(0,1){20}}
\put(-40,15){\line(0,1){15}}
\put(-35,20){\line(0,1){10}}
\put(-30,20){\line(0,1){10}}
\put(-48.5,26.5){\small{$1$}}
\put(-43.5,26.5){\small{$1$}}
\put(-38.5,26.5){\small{$2$}}
\put(-33.5,26.5){\small{$2$}}
\put(-49.5,21.5){\small{$24$}}
\put(-43.5,21.5){\small{$4$}}
\put(-38.5,21.5){\small{$4$}}
\put(-33.5,21.5){\small{$4$}}
\put(-48.5,16.5){\small{$5$}}
\put(-43.5,16.5){\small{$5$}}
\put(-48.5,11.5){\small{$6$}}

\put(-26,18.5){{$\xlongleftrightarrow{\quad}$}}

\put(-10,30){\line(1,0){20}}
\put(-10,25){\line(1,0){20}}
\put(-10,20){\line(1,0){20}}
\put(-10,15){\line(1,0){10}}
\put(-10,10){\line(1,0){5}}
\put(-10,10){\line(0,1){20}}
\put(-5,10){\line(0,1){20}}
\put(0,15){\line(0,1){15}}
\put(5,20){\line(0,1){10}}
\put(10,20){\line(0,1){10}}
\put(-8.5,26.5){\small{$0$}}
\put(-3.5,26.5){\small{$0$}}
\put(1.5,26.5){\small{$1$}}
\put(6.5,26.5){\small{$1$}}
\put(-9.5,21.5){\small{$0$}}
\put(-7.5,21.5){\small{$2$}}
\put(-3.5,21.5){\small{$2$}}
\put(1.5,21.5){\small{$2$}}
\put(6.5,21.5){\small{$2$}}
\put(-8.5,16.5){\small{$2$}}
\put(-3.5,16.5){\small{$2$}}
\put(-8.5,11.5){\small{$2
$}}

\put(14,18.5){{$\xlongleftrightarrow{\quad}$}}

\put(34,30){\line(1,0){20}}
\put(34,25){\line(1,0){20}}
\put(34,20){\line(1,0){20}}
\put(34,15){\line(1,0){10}}
\put(34,10){\line(1,0){5}}
\put(34,10){\line(0,1){20}}
\put(39,10){\line(0,1){20}}
\put(44,15){\line(0,1){15}}
\put(49,20){\line(0,1){10}}
\put(54,20){\line(0,1){10}}

\put(30,6){\line(0,1){28}}
\put(30,6){\line(1,0){3}}
\put(30,34){\line(1,0){3}}

\put(35.5,26.5){\small{$0$}}
\put(40.5,26.5){\small{$0$}}
\put(45.5,26.5){\small{$1$}}
\put(50.5,26.5){\small{$1$}}
\put(35.5,21.5){\small{$2$}}
\put(40.5,21.5){\small{$2$}}
\put(45.5,21.5){\small{$2$}}
\put(50.5,21.5){\small{$2$}}
\put(35.5,16.5){\small{$2$}}
\put(40.5,16.5){\small{$2$}}
\put(35.5,11.5){\small{$2$}}

\put(60,30){\line(1,0){20}}
\put(60,25){\line(1,0){20}}
\put(60,20){\line(1,0){20}}
\put(60,15){\line(1,0){10}}
\put(60,10){\line(1,0){5}}
\put(60,10){\line(0,1){20}}
\put(65,10){\line(0,1){20}}
\put(70,15){\line(0,1){15}}
\put(75,20){\line(0,1){10}}
\put(80,20){\line(0,1){10}}

\put(84,6){\line(0,1){28}}
\put(84,6){\line(-1,0){3}}
\put(84,34){\line(-1,0){3}}
\put(61,21.2){\footnotesize{$C'$}}
\linethickness{2pt}
\put(60,10){\line(0,1){15.3}}
\put(60,25){\line(1,0){10.3}}
\put(70,25){\line(0,1){5.3}}
\put(70,30){\line(1,0){10.3}}

\put(-40,0){$T$}
\put(0,0){$T'$}
\put(45,0){$(Q,\alpha(Q,1),C')$}
\end{picture}
\caption{An alternative representation.
}\label{g-2}
\end{center}
\end{figure}

We are now ready to complete the proof of Theorem \ref{Gen-1} based on the above two representations of a
barely set-valued tableau and the formula in Theorem \ref{JD1}.

\begin{proof}[Proof of Theorem \ref{Gen-1}]
Recall  that the
expected jaggedness of a subshape of $\lambda$ under the
weak distribution equals
\begin{equation}\label{ZZ}
\frac{\sum_{\mu}|\{(P,i)\in Q(\lambda, k)\,|\,\alpha(P,i)=\mu\}|\mathrm{jag}(\mu)}{|Q(\lambda, k)|},
\end{equation}
where $\mu$ ranges over subshapes of $\lambda$.
To compute the numerator of \eqref{ZZ}, note that
\begin{equation*}
\sum_{\mu}|\{(P,i)\in Q(\lambda, k)\,|\,\alpha(P,i)=\mu\}|\mathrm{jag}(\mu)=\sum_{(P,i)\in Q(\lambda,k)}\mathrm{jag}(\alpha(P,i)).
\end{equation*}
Let $C(P,i)$ denote the number of
corners in the subshape $\alpha(P,i)$, and let
$C'(P,i)$ denote the number
of proper outside corners of  $\alpha(P,i)$.
Then we have
\[\mathrm{jag}(\alpha(P,i))=C(P,i)+C'(P,i).\]
Recalling that
\[Q(\lambda,k)=\{(P,i)\,|\, P\in \mathrm{RPP}(\lambda, k), 1 \leq i\leq k\},\]
we get
\begin{align} \label{vvv}
\sum_{(P,i)\in Q(\lambda,k)}\mathrm{jag}(\alpha(P,i))&=\sum_{P\in {\mathrm{ RPP}}(\lambda,k)}\sum_{i=1}^k\,\mathrm{jag}(\alpha(P,i))\nonumber\\[5pt]
&=\sum_{P\in {\mathrm{ RPP}}(\lambda,k)}\sum_{i=1}^k C(P,i) +
\sum_{P\in {\mathrm{ RPP}}(\lambda,k)}\sum_{i=1}^k C'(P,i).
\end{align}
By Theorem \ref{thm-subshape} and Theorem  \ref{thm-subshape'},
both the first double sum  and the second double sum  in \eqref{vvv} are   equal to $|{\mathrm{BSSYT}}(\lambda,k)|$. It
follows that
\begin{equation}\label{ZZZ}
\sum_{\mu}|\{(P,i)\in Q(\lambda, k)\,|\,\alpha(P,i)=\mu\}|\mathrm{jag}(\mu)=  2|{\mathrm{BSSYT}}(\lambda,k)|.
\end{equation}

As to the denominator of  \eqref{ZZ},
we notice that \[ |Q(\lambda, k)|=k|{\mathrm{ RPP}}(\lambda,k)| . \]
On the other hand, there is an obvious bijection between the set ${\mathrm{ RPP}}(\lambda,k)$ and the set ${\mathrm{ SYT}}(\lambda,k)$. Given  a reverse plane partition
$P\in {\mathrm{ RPP}}(\lambda,k)$, one can construct a
semistandard Young tableau in ${\mathrm{ SYT}}(\lambda,k)$  from $P$ by increasing
each entry in the $t$-th row of $P$ by $t$.   Therefore,
\begin{equation}\label{ZZ2}
|Q(\lambda, k)|=k|{\mathrm{ SYT}}(\lambda,k)|.
 \end{equation}
Substituting  \eqref{ZZZ} and \eqref{ZZ2} into \eqref{ZZ},  the expected jaggedness in
\eqref{ZZ} can be rewritten as
\begin{equation*}
\frac{2|{\mathrm{BSSYT}}(\lambda,k)|}{k|{\mathrm{ SYT}}(\lambda,k)|},
\end{equation*}
which, together with Theorem \ref{JD1}, yields
\begin{equation*}
\frac{2|{\mathrm{BSSYT}}(\lambda,k)|}
{k|{\mathrm{SYT}}(\lambda,k)|}=\frac{2rc}{r+c}.
\end{equation*}
 This confirms \eqref{3-1}, and hence the proof is complete.
\end{proof}

We conclude this paper with a formula on the polynomial   $FK(w,\ell)$ with respect to a
dominant permutation $w$ corresponding to a balanced shape.
The proof is based on  Theorem \ref{Gen-1} and a relation on
$FK(w,\ell)$ established by Reiner, Tenner and Yong \cite{Reiner}.
Restricting to a dominant permutation corresponding to a rectangular staircase shape, this formula
reduces to Conjecture \ref{Conj-1-e}. Bear in mind that
$FK(w,\ell)$ is a polynomial in $x$ of degree $\ell$. For the reason that the proof of the following theorem involves the evaluation of the polynomials $FK(w,\ell)$
at $x= 1, 2, \ldots$, we shall write $FK_{w,\ell}(x)$ for $FK(w,\ell)$.

\begin{thm}\label{fi}
Let $w$ be a dominant permutation whose Lehmer code  is a balanced shape $\lambda$ with $r$ rows and $c$ columns, and let  $\ell=\ell(w)$. Then we have
\begin{equation}\label{ffff}
\frac{FK(w,\ell+1)}{FK(w,\ell)}
= \binom{ \ell+1 }{2}\left( \frac{ 2xrc }{\ell(r+c)} + 1\right).
\end{equation}
\end{thm}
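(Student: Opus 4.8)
The plan is to combine Theorem \ref{Gen-1} with the known translation, due to Reiner, Tenner and Yong \cite{Reiner}, between the ratio $FK(w,\ell+1)/FK(w,\ell)$ and the enumeration of barely set-valued tableaux. Recall that for a dominant permutation $w$ whose Lehmer code is the partition $\lambda$, one has $\ell(w)=|\lambda|$, and Reiner, Tenner and Yong expressed the evaluations of the polynomials $FK_{w,\ell}(x)$ at positive integers $x=k$ in terms of flagged (barely) set-valued tableaux: there are constants (independent of $k$) such that $FK_{w,\ell(w)}(k)=c_0\cdot|\mathrm{SYT}(\lambda,k)|$ and $FK_{w,\ell(w)+1}(k)=c_1\cdot|\mathrm{BSSYT}(\lambda,k)|$ for all $k\ge 1$, where the normalizing constants come from the hook-type product appearing in their work. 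First I would record this dictionary precisely, citing the relevant statement of \cite{Reiner}, so that the quotient $FK_{w,\ell+1}(k)/FK_{w,\ell}(k)$ becomes a fixed rational multiple of $|\mathrm{BSSYT}(\lambda,k)|/|\mathrm{SYT}(\lambda,k)|$.

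Next I would invoke Theorem \ref{Gen-1}: since the Lehmer code $\lambda$ is assumed to be a balanced shape with $r$ rows and $c$ columns, we have
\[
\frac{|\mathrm{BSSYT}(\lambda,k)|}{|\mathrm{SYT}(\lambda,k)|}=\frac{krc}{r+c}
\]
for every positive integer $k$. Substituting this into the dictionary from the previous paragraph shows that the quotient $FK_{w,\ell+1}(k)/FK_{w,\ell}(k)$, viewed as a function of $k$, is an affine (degree one) polynomial in $k$: it has the form $\gamma k+\delta$ for constants $\gamma,\delta$ depending only on $\ell$, $r$, $c$. Because $FK_{w,\ell+1}(x)/FK_{w,\ell}(x)$ is a rational function in $x$ that agrees with the polynomial $\gamma x+\delta$ at infinitely many points $x=1,2,3,\dots$, it must equal $\gamma x+\delta$ identically; this is the step where writing $FK_{w,\ell}(x)$ and evaluating at integers pays off. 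It then remains to pin down $\gamma$ and $\delta$.

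To identify the two constants I would match two pieces of data. The constant term $\delta$ (equivalently the value at $x=0$) is forced by the leading coefficients: setting $x=0$, $FK_{w,\ell+1}(0)/FK_{w,\ell}(0)$ counts $0$-Hecke words of length $\ell+1$ against reduced words of $w$, and this ratio is exactly $\binom{\ell+1}{2}$ by the same combinatorial identity already used in \cite{Reiner} to reduce Conjecture \ref{Conj-1} to Conjecture \ref{Conj-1-e} (it is the $w=w_0$ computation of Fomin–Kirillov, Stanley, generalized to dominant $w$). Thus $\delta=\binom{\ell+1}{2}$. The leading coefficient $\gamma$ is then read off from the asymptotics as $k\to\infty$: comparing the top-degree behavior of $FK_{w,\ell+1}(k)$ and $FK_{w,\ell}(k)$ via the hook-content/Stanley-type product formulas — or, more cleanly, simply by plugging the balanced-shape evaluation $|\mathrm{BSSYT}(\lambda,k)|/|\mathrm{SYT}(\lambda,k)|=krc/(r+c)$ back through the normalizing constants $c_0,c_1$ — gives $\gamma=\binom{\ell+1}{2}\cdot\frac{2rc}{\ell(r+c)}$. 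Assembling the pieces yields
\[
\frac{FK(w,\ell+1)}{FK(w,\ell)}=\binom{\ell+1}{2}\left(\frac{2xrc}{\ell(r+c)}+1\right),
\]
as claimed. Specializing to $\lambda=\delta_d(b^a)$, where $r=a(d-1)$, $c=b(d-1)$ and $\ell=\ell(w)=|\delta_d(b^a)|=ab\binom{d}{2}$, one checks that $\frac{2rc}{\ell(r+c)}=\frac{4}{d(a+b)}$, recovering Conjecture \ref{Conj-1-e}.

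The main obstacle I anticipate is not in Theorem \ref{Gen-1}, which is already in hand, but in making the dictionary of the first paragraph airtight: one must be careful about exactly which normalization of $FK_{w,\ell}$ is used in \cite{Reiner}, how the flag $\phi=(k+1,k+2,\dots)$ enters, and whether the equality $FK_{w,\ell}(k)=c_0|\mathrm{SYT}(\lambda,k)|$ holds on the nose or only up to a shift — since any off-by-one there would corrupt both $\gamma$ and $\delta$. Once that bookkeeping is settled, the polynomial-interpolation argument and the two constant computations are routine.
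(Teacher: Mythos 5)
Your overall strategy --- translate the ratio $FK_{w,\ell+1}/FK_{w,\ell}$ into the ratio $|\mathrm{BSSYT}(\lambda,k)|/|\mathrm{SYT}(\lambda,k)|$ via a result of Reiner--Tenner--Yong, substitute Theorem \ref{Gen-1}, and upgrade the resulting identity at $x=1,2,\ldots$ to an identity of polynomials --- is exactly the paper's. But the ``dictionary'' you postulate in your first paragraph is misstated in a way that matters. The relation actually established in the proof of \cite[Corollary 6.11]{Reiner} is affine, not multiplicative:
\[
\frac{FK_{w,\ell+1}(k)}{FK_{w,\ell}(k)}=\binom{\ell+1}{2}+(\ell+1)\,\frac{|\mathrm{BSSYT}(\lambda,k)|}{|\mathrm{SYT}(\lambda,k)|}.
\]
There is no constant $c_1$ with $FK_{w,\ell+1}(k)=c_1\,|\mathrm{BSSYT}(\lambda,k)|$: the summand $\binom{\ell+1}{2}FK_{w,\ell}(k)$ accounts for the length-$(\ell+1)$ Hecke words arising from reduced words, and your multiplicative version would force the constant term $\delta$ of the ratio to be $0$, contradicting the value $\binom{\ell+1}{2}$ you assert two paragraphs later. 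With the correct relation in hand, the theorem follows by substituting Theorem \ref{Gen-1} and noting $(\ell+1)=\binom{\ell+1}{2}\tfrac{2}{\ell}$; no separate determination of $\gamma$ and $\delta$ is needed.

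Your fallback arguments for pinning down the two constants also do not hold up. The constant term of the ratio is its value at $x=0$, namely $\sum i_1\cdots i_{\ell+1}\big/\sum i_1\cdots i_{\ell}$ (sums over Hecke words of the respective lengths); it is the \emph{leading} coefficients, not the values at $0$, that count words, and your sketch conflates the two. In either case the equality $\delta=\binom{\ell+1}{2}$ is a consequence of the theorem rather than an independently available input, so deriving it this way is circular unless you essentially reprove RTY's result; the same applies to reading $\gamma$ off ``asymptotics.'' In short: right skeleton and right use of Theorem \ref{Gen-1}, but the one external fact the argument hinges on is quoted incorrectly, and the proposed repairs do not close the resulting gap. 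Quoting \cite[Corollary 6.11]{Reiner} in the displayed form above turns your outline into the paper's proof.
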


\begin{proof}
In  the proof of \cite[Corollary 6.11]{Reiner}, Reiner,
Tenner and Yong established  the following relation
for any dominant permutation $w$ and any positive integer $k$:
\begin{equation}\label{finn}
\frac{FK_{w,\ell+1}(k)}{FK_{w,\ell}(k)}
 =\binom{\ell+1}{2} + (\ell+1)   \frac{|{\mathrm{BSSYT}}(\lambda,k)|}{|{\mathrm{SYT}}(\lambda,k)|}.
\end{equation}
Substituting \eqref{3-1} into    \eqref{finn}, we obtain that
\begin{equation*}
\frac{FK_{w,\ell+1}(k)}{FK_{w,\ell}(k)}=
\binom{\ell+1}{2}+(\ell+1)\frac{krc}{r+c}
=\binom{ \ell+1 }{2}\left( \frac{ 2krc }{\ell(r+c)} + 1\right),
\end{equation*}
that is to say that \eqref{ffff} holds for any positive integer $k$. Since \eqref{ffff} can be recast as a relation on polynomials, it holds for $FK_{w,\ell}(x)$. This completes
the proof.
\end{proof}

Notice that by \eqref{finn}, it is clear  that   Theorem \ref{fi} is equivalent to Theorem \ref{Gen-1}.

\vskip 3mm \noindent {\bf Acknowledgments.}
This work was supported
by the 973 Project, and the National Science Foundation of
China.

\end{CJK*}
\end{document}